\documentclass[a4paper,12pt]{article}

\usepackage{hyperref}
\usepackage {amsfonts, amssymb, amscd,amsthm, amsmath, eucal}

\usepackage{amsbsy}
\usepackage[all]{xy}

\topmargin 1 cm \topskip 0 cm \textwidth 16 cm \textheight 22 cm
\oddsidemargin 0.5 cm \evensidemargin 0.5 cm \headheight 0 cm
\headsep 0 cm \marginparwidth 0 cm \footskip 1.2 cm
\theoremstyle{plain} { \swapnumbers
  \newtheorem{thm}{Theorem}[subsection]
  
  \newtheorem{cor}[thm]{Corollary}
  \newtheorem{lem}[thm]{Lemma}
  \newtheorem{prop}[thm]{Proposition}

}

\theoremstyle{definition} { \swapnumbers
  
}
\renewcommand{\subsubsection}{\sssection}

\newcommand{\Aff}{\mathbf {A}}
\newcommand{\Pro}{\mathbf {P}}


\newcommand \xra {\xrightarrow }

\newcommand \hra {\hookrightarrow }

\begin{document}

\title
{Rationally trivial quadratic spaces are locally trivial:III}

\author{I.Panin\footnote{The research of the first author
is partially supported by RFBR-grant 10-01-00551-a},
K.Pimenov \footnote{The research of second author was partially supported by RFBR grant  12-01-33057
"Motivic  homotopic cohomology theories on algebraic varieties"}}

\date{25.12.2012}

\maketitle

\begin{abstract}
Main results of \cite{Pa}, \cite{PaPi} are extended to the case of characteristic two.
The proof given in the present preprint is "elementary" and is characteristic free.

More precisely, let $R$ be a regular semi-local domain containing a field such that
all the residue fields are infinite.
Let $K$ be the fraction field of $R$.
Let
$(R^n,q: R^n \to R)$
be a quadratic space over $R$ such that the quadric $\{q=0\}$ is smooth over $R$.
If the quadratic space $(R^n,q: R^n \to R)$ over $R$ is isotropic over $K$, then there is a unimodular
vector $v \in R^n$ such that $q(v)=0$.

If $char(R)=2$, then in the case of even $n$ our assumption on $q$ is equivalent to the one that $q$ is a non-singular space in the sense of
\cite{Kn} and in the case of odd $n > 2$ our assumption on $q$ is equivalent to the one that $q$ is a semi-regular in the sense of
\cite{Kn}.

\end{abstract}

\section{Introduction}
Let $k$ be an infinite field,possibly $char(k)=2$, and let $X$ be a $k$-smooth irreducible affine scheme,let
$x_1, x_2,\dots, x_s \in X$
be closed points. Let $P$ be a free $k[X]$-module of rank $n>0$.
If $n$ is odd, then let $(P, q: P \to k[X])$
be a semi-regular quadratic module over $k[X]$ in the sense of
\cite[Ch.IV, \S3]{Kn}.
If $n$ is even, then let $(P, q: P \to k[X])$
be a non-singular quadratic space in the sense of
\cite[Ch.I, (5.3.5))]{Kn}.
(In both cases it is equivalent of saying that the $X$-scheme $Q:=\{q=0\} \subset \Pro^{n-1}_X$ is smooth over $X$).

Let $p: Q \to X$ be the projection. For a nonzero element
$g \in k[X]$ let $Q_g=p^{-1}(X_g)$.
Let $U=\text{Spec}(\mathcal O_{X,\{ x_1, x_2,\dots, x_s \}})$.
Set $_UQ=U \times _X Q$. For a $k$-scheme $D$ equipped with
$k$-morphisms
$U \leftarrow D$
and
$D \to X_g$
set
$_DQ= \ _UQ \times_U D$
and
$Q_{D,g}=D \times_{X_g} Q_g$.

\begin{prop}
\label{Main}
If $n>1$, then there exists a finite surjective \'{e}tale $k$-morphism
$U \leftarrow D$
of odd degree, a morphism
$D \to X_g$
and an isomorphism of the $D$-schemes
$_DQ \xleftarrow{\bar \Phi} Q_{D,g}$.
\end{prop}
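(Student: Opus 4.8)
The plan is to follow the ``geometric presentation plus patching'' scheme of \cite{Pa}, \cite{PaPi}, but arranged to be characteristic free. Note first that the assertion is purely about the quadric bundle $Q\to X$ near $\{x_1,\dots,x_s\}$: it says that, after an odd \'{e}tale base change $D\to U$, the fibre $_UQ$ over the semi-local scheme can be ``slid off'' the divisor $Z:=\{g=0\}$ without changing the isomorphism type of the quadric bundle; no (an)isotropy of $q$ is involved. Shrinking $X$ to a principal affine open neighbourhood of the $x_i$ (harmless for $U$, for $_UQ$ and, after enlarging $g$, for the $Q_{D,g}$), we may assume $Z\subsetneq X$. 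So the task is to move the semi-local scheme $U$ away from $Z$ inside $X$, up to an odd \'{e}tale cover, keeping the pullback of $Q$ unchanged.

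\emph{Reduction to a relative curve.} First I would apply a Quillen--Gabber elementary fibration lemma — this is where the infinite field $k$ is used, via generic linear projections: after a further shrinking of $X$ around the $x_i$ there is a smooth morphism $\pi\colon X\to S$ onto a smooth affine $k$-scheme $S$ with $\dim S=\dim X-1$, with curves as fibres, such that $\pi|_Z\colon Z\to S$ is finite and the $x_i$ are in convenient position over $S$. Let $S'$ be the semi-localisation of $S$ at $\pi(\{x_i\})$ and put $\mathcal X:=X\times_S S'$; then $\mathcal X\to S'$ is a smooth affine relative curve over the regular semi-local ring $\mathcal O(S')$, the projection $\mathrm{pr}\colon\mathcal X\to X$ pulls $Q$ back to a smooth quadric bundle $\mathcal Q:=\mathrm{pr}^{*}Q$ over $\mathcal X$, $\mathcal Z:=Z\times_S S'$ is finite over $S'$, and $U$ is precisely the semi-localisation of $\mathcal X$ at the $x_i$, with $_UQ=\mathcal Q|_U$. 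Unwinding fibre products, to give, for a finite surjective \'{e}tale $D\to U$, a morphism $D\to X_g$ together with a $D$-isomorphism $_DQ\cong Q_{D,g}$ is the same as to give a morphism $h\colon D\to\mathcal X\setminus\mathcal Z$ together with a $D$-isomorphism between $\mathcal Q$ pulled back along the canonical map $D\to U\hookrightarrow\mathcal X$ and along $h$. So I am reduced to producing such $D$ (of odd degree) and $h$ over the relative curve $\mathcal X/S'$.

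\emph{Patching over the curve.} On $\mathcal X/S'$ I would use Quillen's trick once more to fix a finite morphism $\tau\colon\mathcal X\to\Aff^1_{S'}$ that is \'{e}tale over an open neighbourhood $W$ of $\mathcal Z\cup\{x_i\}$ and restricts to a closed immersion on $\mathcal Z$; this turns $W$ into a Nisnevich neighbourhood, via $\tau$, of the image of $\mathcal Z\cup\{x_i\}$ in $\Aff^1_{S'}$. Now one runs the patching argument: $\mathcal Q|_W$ is compared, along $\tau$ and by a formal/Beauville--Laszlo gluing, with a quadric bundle $\mathcal Q'$ over the whole affine line $\Aff^1_{S'}$; a characteristic-free extension statement in the spirit of Harder's theorem — every smooth quadric bundle over $\Aff^1$ of a regular semi-local ring is pulled back from the base — then shows $\mathcal Q'$ is constant along the line, and transporting this isomorphism back through $\tau$ gives, after an odd \'{e}tale base change $D\to U$, a morphism $h\colon D\to\mathcal X\setminus\mathcal Z$ along which $\mathcal Q$ pulls back to the same quadric bundle as along $D\to U\hookrightarrow\mathcal X$. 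The cover $D\to U$ is exactly the device making the ``moving'' compatible with the quadric bundle — concretely a suitable multisection arising in the gluing, \'{e}tale over $U$ by a Bertini argument over the infinite residue fields — and its degree over $U$ is arranged to be odd, which is also the parity that the Springer-type descent in the main theorem will need.

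\emph{Main obstacle.} The real content, and the reason characteristic two requires a separate treatment, is the last paragraph: proving, in a characteristic-free way, the ``extended from the base'' statement for smooth quadric bundles over $\Aff^1$ of a regular semi-local ring, and then running the gluing while simultaneously controlling (i) \'{e}taleness of $D\to U$, (ii) its oddness, and (iii) the isomorphism $\bar\Phi$. The heart of the extension statement is its base case, $R$ a semi-local Dedekind ring — hence a principal ideal ring — which must be handled by hand in a way insensitive to the characteristic; the rest is the fibration-and-patching bookkeeping sketched above.
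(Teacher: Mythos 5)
Your first reduction (shrinking $X$ and fibering it as a smooth relative curve over a base $S$ with $Z=\{g=0\}$ finite over $S$) is exactly what the paper does, via \cite[Prop.~3.3, 3.4]{PSV}. But from that point on your argument diverges and has two genuine gaps. First, the entire weight of your proof rests on an unproved input: the ``characteristic-free extension statement in the spirit of Harder's theorem'' that every smooth quadric bundle over $\Aff^1$ of a regular semi-local ring is pulled back from the base, fed into a Nisnevich/Beauville--Laszlo patching. You yourself flag this as the main obstacle, and rightly so --- that statement is a Horrocks/Grothendieck--Serre-type theorem for the orthogonal (or similitude) group, which is at least as deep as the Proposition being proved and is precisely what one does not know how to do uniformly in characteristic $2$ by the classical arguments. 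The paper avoids it entirely. Its mechanism for producing $\bar\Phi$ is the Equating Lemma (Appendix A): after passing to a Galois cover $\tilde X\to X$ splitting $q$ up to proportionality (Knus), one forms the intermediate quotient $\overline{U\times_S X}=(\tilde U\times_S\tilde X)/\Delta(\Gamma)$ and shows by an explicit $1$-cocycle computation that the two pullbacks $\rho_S^*(p_U^*q)$ and $\rho_S^*(p_X^*q)$ become proportional there. Consequently \emph{any} multisection $D\hra\overline{U\times_S X}$ avoiding $\mathcal Z$ automatically carries the required isomorphism $\bar\Phi$; no extension theorem over $\Aff^1$ is needed. This diagonal-quotient construction is absent from your proposal and is the characteristic-free idea that makes the paper work.

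Second, you assert that the degree of $D\to U$ ``is arranged to be odd'' without saying how; this is the other essential point. The paper gets oddness from a concrete numerical identity: the relative curve $\overline{U\times_S X}$ carries the section $\tilde\Delta$ of its projection to $U$, and the finite surjective morphism $\alpha:\mathcal X^0\to\Aff^1\times U$ produced by the Ojanguren--Panin-type Lemma (Appendix B) satisfies $\alpha^{-1}(0\times U)=\tilde\Delta(U)\coprod D_0$ and $\alpha^{-1}(1\times U)=D_1$ with both $D_0,D_1$ disjoint from $\mathcal Z$; flatness gives $[D_1:U]=[D_0:U]+1$, so one of the two is of odd degree and can be taken as $D$. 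Without this (or some substitute) your construction could perfectly well output a cover of even degree, which would be useless for the Springer-type descent in the Main Theorem. In short: the skeleton (fibration into curves, moving off $Z$ by a finite cover) matches the paper, but the two steps that carry the actual content --- producing $\bar\Phi$ and producing oddness --- are respectively replaced by an unproved theorem and by an unsupported assertion.
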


Given this Proposition we may prove the following Theorem
\begin{thm}[Main]
Assume that $g \in k[X]$
is a non-zero element such that there is a section
$s: X_g \to Q$ of the projection
$Q_g \to X_g$.
Then there is a section
$s_U: U \to \ _UQ$
of the projection
$_UQ \to U$.
\end{thm}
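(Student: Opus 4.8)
The plan is to deduce the Theorem from Proposition \ref{Main} by transporting the given section $s : X_g \to Q$ along the isomorphism $\bar\Phi$ to the semi-local scheme $U$, and then to descend from $D$ back to $U$ using the odd-degree property of the \'etale cover $D \to U$. First I would apply Proposition \ref{Main} to obtain the finite surjective \'etale morphism $D \to U$ of odd degree, a morphism $D \to X_g$, and the $D$-isomorphism $\bar\Phi : Q_{D,g} \xrightarrow{\sim} {}_DQ$. Pulling back the section $s$ along $D \to X_g$ gives a section $s_D : D \to Q_{D,g}$ of the projection $Q_{D,g} \to D$ (a base change of $Q_g \to X_g$); composing with $\bar\Phi$ yields a section $\bar\Phi \circ s_D : D \to {}_DQ$ of the projection ${}_DQ \to D$. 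Since ${}_DQ = {}_UQ \times_U D$, this is exactly a $D$-point of the $U$-scheme ${}_UQ$, i.e. a $U$-morphism $D \to {}_UQ$.

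The remaining and essential point is to pass from a $D$-point of ${}_UQ$ to a $U$-point, using that $D \to U$ is finite \'etale of odd degree. The geometric input here is that $U = \Spec(\mathcal O_{X,\{x_1,\dots,x_s\}})$ is semi-local with infinite residue fields, and that ${}_UQ \to U$ is a smooth quadric bundle of fibre dimension $n-2 \geq 0$. The standard mechanism is a norm/transfer argument for quadrics: given the closed point structure, one reduces to showing that over each (semi-)local ring the existence of a point over an odd-degree \'etale extension forces the existence of a rational point. Concretely, I would argue fibrewise over the closed points $x_i$: the fibre $Q_{x_i}$ is a smooth quadric over the infinite field $\kappa(x_i)$ acquiring a point over the odd-degree \'etale $\kappa(x_i)$-algebra $D\otimes_U \kappa(x_i)$; by Springer's theorem on odd-degree extensions (valid in all characteristics for smooth quadrics, since smoothness of $Q$ is precisely non-singularity resp.\ semi-regularity of $q$ as noted in the Introduction), $Q_{x_i}$ has a $\kappa(x_i)$-rational point. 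Having rational points in all closed fibres of the smooth projective morphism ${}_UQ \to U$, one then lifts to a section over the semi-local base: this is where the smoothness of ${}_UQ \to U$ and the semi-local, infinite-residue-field hypotheses are used, via a standard implicit-function-type / Hensel-plus-density lifting argument over semi-local rings.

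I expect the main obstacle to be this last descent step done carefully and characteristic-freely: making the Springer odd-degree argument work uniformly for smooth quadrics over possibly non-perfect fields of characteristic two, and then rigorously lifting fibrewise rational points to an honest section $s_U : U \to {}_UQ$ over the semi-local ring $U$ rather than merely over the closed points. The quadric-bundle case ($n$ large) requires knowing that ${}_UQ \to U$ is smooth projective with geometrically integral fibres of dimension $n-2$; smoothness gives a section in an \'etale neighbourhood of each closed point, and then one uses that $U$ is semi-local to glue these into a section over all of $U$, invoking infiniteness of the residue fields to find a common value. Once the section over $U$ is produced, composing with ${}_UQ \to U$ being the identity on the base confirms it is the desired $s_U$, completing the proof.
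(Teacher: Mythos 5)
Your first half coincides with the paper's proof: apply Proposition \ref{Main}, pull $s$ back to $s_D: D \to Q_{D,g}$, compose with $\bar \Phi$ and the projection $_DQ \to \ _UQ$ to get a $U$-morphism $D \to \ _UQ$, i.e.\ an isotropic point of the quadric over the odd-degree finite \'{e}tale $U$-algebra $D$. The paper then finishes in one stroke by citing the relative Springer theorem of \cite{PR}: a quadratic space over a (semi-)local ring with infinite residue fields that becomes isotropic over a finite \'{e}tale algebra of odd degree is already isotropic over the ring. You correctly identify this as the statement needed, but the proof you sketch for it contains a genuine gap.

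The gap is the final lifting step. After applying Springer's theorem over each residue field $\kappa(x_i)$ you claim that rational points in the closed fibres of the smooth projective morphism $_UQ \to U$ lift to a section over $U$ by a ``Hensel-plus-density'' argument. This fails: $U$ is not Henselian, and smoothness only produces a section over the Henselization or the completion, not over $\mathcal O_{X,\{x_1,\dots,x_s\}}$ itself. Concretely, for $R=\mathbb{R}[t]_{(t)}$ and $q=x^2-(1+t)y^2$ the quadric $\{q=0\}\subset \Pro^1_R$ is smooth over $R$ and has the rational point $[1:1]$ in its closed fibre, yet a unimodular zero over $R$ would force $1+t$ to be a square in $\mathbb{R}(t)$, which it is not. (If ``isotropic in the closed fibres'' implied ``isotropic over the semi-local ring'' for smooth quadrics, the Main Theorem would be immediate and the whole paper superfluous.) Reducing to the residue fields throws away exactly the information you need; the correct argument of \cite{PR} is a \emph{relative} Springer descent carried out over the semi-local ring itself: present $D$ as $U[t]/(f)$ with $f$ monic (possible since the residue fields are infinite), view the isotropic vector over $D$ as a vector of polynomials, and run Springer's degree-reduction over $U[t]$, using the infiniteness of the residue fields to keep the relevant leading coefficients invertible; this yields a unimodular isotropic vector over $U$, with only a mild modification in characteristic two. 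Your proof should either invoke that result as a black box, as the paper does, or reproduce that relative argument rather than the fibrewise one.
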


\begin{proof}[Proof of Main Theorem]
We will give a proof of the Theorem only in the local case and left to the reader the semi-local case.
So, $s=1$ and we will write $x$ for $x_1$ and $\mathcal O_{X,x}$ for $\mathcal O_{X,\{ x_1 \}}$.
If $g \in k[X]-m_{x}$, then there is nothing to prove. Now let $g \in m_x$ then by
Proposition \ref{Main}
there is a a finite surjective \'{e}tale $k$-morphism
$U \leftarrow D$
of odd degree, a morphism
$D \to X_g$
and an isomorphism of the $D$-schemes
$_DQ \xleftarrow{\bar \Phi} Q_{D,g}$.

The section $s$ defines a section
$s_D=(id,s): D \to Q_{D,g}$ of the projection
$Q_{D,g} \to D$.
Further
$\bar \Phi \circ s_D: D \to \ _DQ$
is a section of the projection
$_DQ \to D$.
Finally,
if $p_1: \ _DQ \to \ _UQ$ is the projection, then
$p_1 \circ \bar \Phi \circ s_D : D \to \ _UQ$
is a $U$-morphism of $U$-schemes.
Recall that
$U \leftarrow D$
is a a finite surjective \'{e}tale $k$-morphism
of odd degree and $U$ is local with an infinite residue field.
Whence by a variant of Springer's theorem proven in [PR] there is a section
$s_U: U \to \ _UQ$
of the projection
$_UQ \to U$.
(If char(k)=2 the proof a variant of Springer's theorem
given in [PR] works well with a very mild modification).
The Theorem is proven.

\end{proof}
The Main Theorem has the following corollaries
\begin{cor}[Main1]
Let $\mathcal O_{X,\{ x_1, x_2,\dots, x_s \}}$ be the semi-local ring as above and let $k(X)$ be the rational function field on $X$.
Let $P$ be a free $\mathcal O_{X,\{ x_1, x_2,\dots, x_s \}}$-module of rank $n > 1$ and $q: P \to \mathcal O_{X,\{ x_1, x_2,\dots, x_s \}}$
be a form over $\mathcal O_{X,\{ x_1, x_2,\dots, x_s \}}$ as above, that is the $\mathcal O_{X,\{ x_1, x_2,\dots, x_s \}}$-scheme
$Q:=\{q=0\}\subset \Pro^{n-1}_{\mathcal O_{X,x}}$ is smooth over $\mathcal O_{X,x}$.
If the equation $q=0$ has a non-trivial solution over $k(X)$, then it has a unimodular solution over $\mathcal O_{X,\{ x_1, x_2,\dots, x_s \}}$.
\end{cor}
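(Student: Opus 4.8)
\medskip

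\noindent\emph{Proof strategy.}
The plan is to deduce the Corollary from the Main Theorem by two spreading-out steps followed by the passage from a point of the projective quadric to a unimodular vector. Write $A=\mathcal O_{X,\{x_1,\dots,x_s\}}$ and $U=\Spec A$. First I would spread the data out so that it lives over an honest smooth affine $k$-variety: since $A=\varinjlim_{h}k[X_h]$, the colimit running over $h\in k[X]$ with $h\notin\bigcup_i m_{x_i}$, and since $P$ is free, the form $q$ is given by finitely many coefficients lying in $A$ and therefore descends to a form over $k[X_{h_0}]$ for a suitable such $h_0$; enlarging $h_0$ if necessary (smoothness being a property that spreads out), the resulting $X_{h_0}$-scheme $\{q=0\}\subset\Pro^{n-1}_{X_{h_0}}$ is smooth over $X_{h_0}$. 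Replacing $X$ by $X_{h_0}$ — which changes neither $A$, as $h_0$ becomes a unit, nor the closed points $x_1,\dots,x_s$, nor $U$ — I may and will assume from now on that $(P,q)$ is a form over $k[X]$ of exactly the type considered in the Introduction.

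Next I would spread out the rational solution. By hypothesis $q=0$ has a non-trivial solution over $k(X)$, i.e.\ a non-zero $v_0\in k(X)^n$ with $q(v_0)=0$; since $k(X)=\mathrm{Frac}(k[X])$ this is a $k(X)$-point of $Q=\{q=0\}$ lying over the generic point of $X$. As $Q$ is of finite type over $X$ and $k(X)=\varinjlim_{g\ne 0}k[X_g]$, this point extends to a section $s\colon X_g\to Q$ of the projection $Q_g\to X_g$ for some non-zero $g\in k[X]$ (here $g$ is allowed to lie in some $m_{x_i}$, which is harmless for the Main Theorem). The hypothesis of the Main Theorem is now satisfied, and it produces a section $s_U\colon U\to{}_UQ$ of the projection ${}_UQ\to U$.

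Finally I would read off a unimodular solution from $s_U$. The section $s_U$ is a $U$-point of $Q\subset\Pro^{n-1}_X$; since $U$ is semi-local every line bundle on $U$ is trivial, so this point is represented by a unimodular vector $v\in P$ (taken up to multiplication by a unit of $A$), and the fact that it lies on the quadric $Q$ means precisely $q(v)=0$ (recall that $q$ is homogeneous of degree two, so its vanishing on the line spanned by $v$ is equivalent to $q(v)=0$). Hence $v$ is a unimodular solution of $q=0$ over $A=\mathcal O_{X,\{x_1,\dots,x_s\}}$, as desired. Since Proposition \ref{Main} and the Main Theorem are available, no deep obstacle remains; the hard part is really just the bookkeeping — carrying out the two spreading-out steps over the appropriate colimits and in the right order, checking that shrinking $X$ to a principal open leaves $U$ and the points $x_i$ untouched, and invoking the triviality of $\Pic(U)$ to turn the projective point $s_U$ into the desired unimodular vector $v\in P$.
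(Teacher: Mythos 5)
Your proof is correct and follows exactly the route the paper intends: the authors state this corollary as a direct consequence of the Main Theorem without writing out the details, and your two spreading-out steps (of the form and of the rational zero) together with the triviality of $\Pic(U)$ are precisely the missing bookkeeping. Nothing further is needed.
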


\begin{cor}[Main2]\label{Main2}
Let $R$ be a semi-local regular domain containing a field and $R$ is such that all the residue fields are infinite.
Let $K$ be the fraction field of $R$.
Let $P$ be a free $R$-module of rank $n > 1$ and $q: P \to R$
be a quadratic form over $R$ such that the $R$-scheme
$Q:=\{q=0\}\subset \Pro^{n-1}_{R}$ is smooth over $R$.
If the equation $q=0$ has a non-trivial solution over $K$,
then it has a unimodular solution over $R$.
\end{cor}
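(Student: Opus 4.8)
The statement to prove is Corollary Main2: given a semi-local regular domain $R$ containing a field, with all residue fields infinite, fraction field $K$, and a quadratic form $q: P \to R$ on a free module of rank $n > 1$ with $Q = \{q = 0\} \subset \mathbf{P}^{n-1}_R$ smooth over $R$, if $q = 0$ has a nontrivial $K$-solution then it has a unimodular $R$-solution. I need to reduce this to Corollary Main1, which handles the case $R = \mathcal O_{X, \{x_1,\dots,x_s\}}$ for $X$ a smooth affine $k$-scheme over an infinite field $k$.

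The plan is as follows. First, reduce to the case where $R$ is the semi-local ring of finitely many points on a smooth affine variety over an infinite field. Since $R$ is regular semi-local and contains a field, let $k_0$ be the prime field; because the residue fields are infinite we may enlarge $k_0$ to an infinite perfect subfield $k \subseteq R$ (e.g. adjoin a transcendence-free infinite extension, or simply take $k$ to be the prime field if it is already infinite, and otherwise an infinite algebraic extension of $\mathbf F_p$ sitting inside $R$ — such exists because the residue fields are infinite of characteristic $p$; alternatively invoke Popescu's theorem). By Popescu's smoothing theorem, $R$ is a filtered colimit of smooth $k$-algebras $R_\alpha$; the finite data $(P, q)$ and the $K$-point of $Q$ descend to some $R_\alpha$, and the smoothness of $\{q=0\}$ over $\mathrm{Spec}\,R$ descends (it is an open condition) to some affine smooth $X = \mathrm{Spec}\,R_\alpha$ after shrinking. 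The image of the finitely many maximal ideals of $R$ gives finitely many points (not necessarily closed) of $X$, so $R$ is a localization of the semi-local ring $\mathcal O_{X, \{x_1,\dots,x_s\}}$; replacing the $x_i$ by closed specializations lying in the relevant locus and using that $k$ is infinite (so closed points are dense and have the right residue-field behavior after a further finite extension), we arrange that $R$ dominates $\mathcal O_{X,\{x_1,\dots,x_s\}}$ with $X$ smooth affine over the infinite field $k$.

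Second, transport the hypothesis: the nontrivial $K$-solution of $q = 0$ is a $k(X)$-point of $Q$ (since $K \supseteq k(X)$ and $Q$ is defined over $\mathcal O_{X,\{x_1,\dots,x_s\}}$, hence the generic fiber acquires a point over the function field; more carefully, the $K$-solution spreads out to a solution over some $X_g$, $g \neq 0$, giving a section $s : X_g \to Q$ of $Q_g \to X_g$). Now apply Corollary Main1 (equivalently, the Main Theorem together with the spreading-out just performed) to obtain a section $s_U : U \to {}_U Q$ over $U = \mathrm{Spec}\,\mathcal O_{X,\{x_1,\dots,x_s\}}$, i.e. a unimodular solution of $q = 0$ over $\mathcal O_{X,\{x_1,\dots,x_s\}}$. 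Finally, base-change this unimodular solution along $\mathcal O_{X,\{x_1,\dots,x_s\}} \to R$: unimodularity is preserved under ring extension, so we obtain a unimodular solution of $q = 0$ over $R$, as required.

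The main obstacle is the first step: carefully producing a smooth affine $k$-model $X$ over an infinite field $k$ together with points $x_1,\dots,x_s$ such that $R$ is dominated by (ideally equal to, or a localization of) $\mathcal O_{X,\{x_1,\dots,x_s\}}$ and such that the smoothness of the quadric and the relevant $K$-point all descend compatibly. This is where the hypotheses "contains a field" and "all residue fields infinite" are used, and where one must invoke Popescu smoothing (or a direct limit argument over finitely generated subalgebras followed by resolution of the smoothness locus). One subtlety: after passing to the colimit the points $x_i$ of $X$ need not be closed, and Corollary Main1 is stated for the semi-local ring at finitely many closed points; this is handled by a standard specialization/prime-avoidance argument, replacing each $x_i$ by a closed point in its closure lying in the smooth locus and retaining the infinite residue field (automatic over an infinite base field $k$ after possibly a further odd-degree-free finite extension of $k$, or simply because Corollary Main1's proof only needs the residue fields to be infinite, which for closed points on a $k$-variety over infinite $k$ we can ensure). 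Once the model is in place, the remaining steps are formal: apply Main1, then base change along a flat (indeed local) homomorphism, under which both "solution of $q=0$" and "unimodular" are stable.
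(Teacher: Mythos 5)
The paper itself gives no proof of Corollary Main2 --- it is stated as a consequence of the Main Theorem, and the presence of Popescu's paper \cite{P} in the bibliography makes it clear that the intended argument is exactly the one you outline: use N\'eron--Popescu desingularization to present $R$ as a filtered colimit of smooth algebras over a field, descend the form, the smoothness of the quadric and the $K$-point to a model $X$, pass to closed specializations of the images of the maximal ideals, apply Corollary Main1, and push the unimodular isotropic vector forward to $R$. Those latter steps are all correct as you describe them (spreading out is standard, unimodularity is preserved under any ring homomorphism), so in characteristic zero, where the prime field is already infinite and perfect, your reduction is essentially complete.

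The genuine gap is in your first step in characteristic $p$: you need an infinite field $k$ such that (i) $k\to R$ is \emph{geometrically} regular, which is what Popescu's theorem actually requires, and (ii) the descended models are smooth over $k$ with the relevant closed points having infinite residue fields. Neither of your suggestions achieves this. The claim that $R$ contains an infinite algebraic extension of $\mathbf{F}_p$ ``because the residue fields are infinite'' is false: an infinite residue field of characteristic $p$ such as $\mathbf{F}_p(t)$ contains no infinite algebraic extension of $\mathbf{F}_p$, and even when the residue field does contain one, there is no Henselian lifting available to place it inside $R$. If instead you manufacture $k=\mathbf{F}_p(t)\subseteq R$ by choosing a unit $t$ with transcendental residues (which the Chinese Remainder Theorem does allow when no residue field is algebraic over $\mathbf{F}_p$), then $k$ is imperfect, and regularity of $R$ does not imply geometric regularity of $R$ over $k$ (e.g.\ $R=\mathbf{F}_p(t^{1/p})\supseteq \mathbf{F}_p(t)$ is regular but not geometrically regular over $\mathbf{F}_p(t)$), so the colimit presentation by smooth $k$-algebras may simply fail. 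The standard repair is to apply Popescu over the perfect prime field $\mathbf{F}_p$ and then deal with the resulting finite residue fields separately: for instance, base change $R$ to $R\otimes_{\mathbf{F}_p}F$ with $F=\bigcup_i\mathbf{F}_{p^{3^i}}$ an infinite perfect field, run the argument there, note that the unimodular isotropic vector is then defined over a finite \'etale $R$-algebra of odd degree, and descend to $R$ by the Panin--Rehmann variant of Springer's theorem \cite{PR} --- the same tool already used in the proof of the Main Theorem. Without some such device, your reduction does not cover characteristic $p$.
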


\begin{cor}[Main3]
Let $R$ be a semi-local regular domain containing a field and $R$ is such that all the residue fields are infinite.
Let $K$ be the fraction field of $R$.
Let $P$ be a free $R$-module of even rank $n > 0$ and $q: P \to R$
be a quadratic form over $R$ such that the $R$-scheme
$Q:=\{q=0\}\subset \Pro^{n-1}_{R}$ is smooth over $R$.
Let $u \in R^{\times}$ be a unit.
If $u$ is represented by $q$ over $K$, then $u$ is represented by $q$ already over $R$.

If $1/2 \in R$, then the same holds for a quadratic space of an arbitrary rank.
\end{cor}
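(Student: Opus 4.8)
The plan is to reduce the assertion to Corollary~\ref{Main2} applied to the rank $(n+1)$ quadratic form $q' := q \perp \langle -u \rangle$ on $P \oplus R$. The first step is to check that the quadric $Q' := \{q'=0\} \subset \Pro^n_R$ is again smooth over $R$. When $1/2 \in R$ this is immediate, since adjoining $\langle -u \rangle$ with $u$ a unit keeps a non-degenerate form non-degenerate. When $\charac(R)=2$ and $n$ is even one uses that non-singularity of $q$ means exactly that the linear forms $\partial q/\partial x_i$ have only the trivial common zero; hence the only point of $\Pro^n_R$ at which all partial derivatives of $q(x)-uz^2$ vanish is $[0:\dots:0:1]$, and this point is not on $Q'$ because $u$ is a unit. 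Thus $Q'$ is smooth over $R$, i.e. $q'$ is semi-regular of odd rank in the sense of \cite{Kn}.

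The hypothesis that $u$ is represented by $q$ over $K$ gives $v \in P \otimes_R K$ with $q(v)=u$, so that $(v,1) \in (P\oplus R)\otimes_R K$ is a non-trivial zero of $q'$ over $K$. Corollary~\ref{Main2}, applied to $(P\oplus R, q')$ (of rank $n+1>1$), then yields a unimodular vector $(w,a) \in P\oplus R$ with $q(w)=ua^2$; in other words $q'$ has a unimodular isotropic vector over $R$.

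It remains to pass from this unimodular zero of $q'$ to a representation of $u$ by $q$ over $R$. If $a$ were a unit we could take $a^{-1}w$, but over a semi-local ring $a$ may lie in some of the maximal ideals, and I do not expect such a naive normalization to suffice. Instead I would invoke the structure theory of semi-regular quadratic spaces over semi-local rings (Knebusch, Baeza), valid in all characteristics: such a space carrying a unimodular isotropic vector splits off a hyperbolic plane, so $q' \cong \mathbb{H} \perp q'_0$ with $\rk q'_0 = n-1$. Comparing radicals of the underlying bilinear forms of $q \perp \langle -u\rangle$ and of $\mathbb{H}\perp q'_0$ one gets $q \perp \langle -u\rangle \cong \mathbb{H}\perp \langle -u\rangle\perp N_0$ with $N_0$ non-singular, and cancelling the scalar summand $\langle -u\rangle$ (Witt cancellation for semi-regular forms over semi-local rings) gives $q \cong \mathbb{H}\perp N_0$; hence $q$ is isotropic and represents every element of $R$, in particular $u$. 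When $1/2\in R$ there is a shortcut covering arbitrary rank: then $\langle -u\rangle\perp\langle u\rangle \cong \mathbb{H}$, so adding $\langle u\rangle$ to $q\perp\langle -u\rangle\cong\mathbb{H}\perp q'_0$ and cancelling one hyperbolic plane yields $q\cong q'_0\perp\langle u\rangle$ directly. I expect this final descent — and particularly its characteristic-two case, where one-dimensional forms are degenerate — to be the only delicate point, forcing genuine use of the semi-regular structure theory rather than a rescaling of coordinates.
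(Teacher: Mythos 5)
Your overall strategy is surely the intended one (the paper states this corollary without proof): pass to $q'=q\perp\langle -u\rangle$ of odd rank $n+1$, check that $\{q'=0\}\subset\Pro^{n}_R$ is still smooth, and apply Corollary \ref{Main2}. Those steps are correct, as is the observation that the resulting unimodular isotropic vector $(w,a)$ of $q'$ cannot lie in the radical of the polar form modulo any maximal ideal, so that $q'\cong\mathbb{H}\perp q'_0$; and your characteristic-not-2 endgame (adjoin $\langle u\rangle$, use $\langle u\rangle\perp\langle-u\rangle\cong\mathbb{H}$, cancel a hyperbolic plane) is fine, since Witt cancellation of \emph{nonsingular} summands does hold over semi-local rings.

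The gap is the characteristic-2 endgame. ``Cancelling the scalar summand $\langle -u\rangle$'' is not covered by any cancellation theorem for semi-regular forms, and it is in fact false: in characteristic $2$ the summand $\langle -u\rangle$ is precisely the radical of the polar form of $q'$, and cancellation of the radical fails already over fields. Concretely, over $\mathbb{F}_2(t)$ the substitution $e_1\mapsto e_1+e_3$ gives $\bigl(tx^2+xy+y^2\bigr)\perp\langle t\rangle\cong\mathbb{H}\perp\langle t\rangle$, while $tx^2+xy+y^2$ is anisotropic by a degree count; so from $q\perp\langle -u\rangle\cong\mathbb{H}\perp N_0\perp\langle -u\rangle$ you may not conclude $q\cong\mathbb{H}\perp N_0$. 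The conclusion you want is still true, but the last step must be done differently, and it genuinely needs the hypothesis --- which your argument never uses --- that the residue fields are infinite. One repair: let $v=(w,a)$, let $v'=(w',a')$ be its hyperbolic partner in $q'$, and let $e$ be the generator of $\mathrm{rad}(b_{q'})$, so $q'(e)=-u$ and $e$ has $\langle -u\rangle$-coordinate $1$. For every $\nu\in R$ the vector $y_\nu=v+u\nu^2v'+\nu e$ is isotropic for $q'$, and its $\langle -u\rangle$-coordinate is $ua'\nu^2+\nu+a$, a polynomial of degree at most $2$ in $\nu$ with linear coefficient $1$; since the residue fields are infinite, the Chinese remainder theorem lets you choose $\nu$ making this a unit $d$, whence $q(w+u\nu^2w')=ud^2$ and $q\bigl(d^{-1}(w+u\nu^2w')\bigr)=u$.
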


\begin{proof}[Proof of Proposition \ref{Main}]
The following Lemma is a corollary from Lemma 2.2.1 and
Proposition 3.1.7. from \cite{Kn}
\begin{lem}
For $n>1$
there exists an affine open subset $X^0$ containing $x$ and a Galois \'{e}tale cover
$\tilde X^0 \xra{\pi} X^0$
such that
the $k[X^0]$-module
$P \otimes_{k[X]} k[X^0]$ coincides with $k[X^0]^n$ and
$\pi^*(q)$ is proportional to the quadratic space
$\bot^m_{i=1}T_iT_{i+m}$ in the case $n=2m$ and
is proportional to the semi-regular quadratic module
$\bot^m_{i=1}T_iT_{i+m}\ \bot \ T^2_n$
in the case $n=2m+1$.
\end{lem}

By this Lemma we may and will assume that $P=k[X]^n$ and that we are given with
a Galois \'{e}tale cover
$\pi: \tilde X \xra{\pi} X$
such that the quadratic space
$\pi^*(q)$ is proportional to a split quadratic space.
Let $\Gamma$ be the Galois group of $\tilde X$ over $X$.
Let $\tilde U= \pi^{-1}(U) \subset \tilde X$.

Let
$\overline {U \times X}:= (\tilde U \times \tilde X)/\Delta(\Gamma)$.
Clearly,
$U \times X = (\tilde U \times \tilde X)/(\Gamma \times \Gamma)$.
Let
$\rho: \overline {U \times X} \to U \times X$
be the obvious map.

Let
$p_2: U \times X \to X$
be projection to $X$ and
$p_1: U \times X \to U$
be the projection to $U$.
The quadratic spaces
$p^*_1(q)$
and
$p^*_2(q)$
over $U \times X$ are not proportional in general. However the following Proposition holds
(see Appendix, Lemma \ref{equating2})
\begin{prop}
\label{equating}
The quadratic spaces $\rho^*(p^*_2(q))$ and $\rho^*(p^*_1(q))$ are proportional.

\end{prop}

Further by
\cite[Prop. 3.3, Prop. 3.4]{PSV}
and
\cite{PaSV}
we may find an open $X^{\prime}$ in $X$ containing $x$ and an
open affine $S \subset \Pro^{d-1}$ (d=dim(X)) and a smooth morphism
$f^{\prime}: X^{\prime} \to S$ making $X^{\prime}$ into a smooth relative curve
over $S$ with the geometrically irreducible fibres.
Moreover we may find $f^{\prime}$ such that
$f^{\prime}|_{X^{\prime} \cap Z}: Z^{\prime}=X^{\prime} \cap Z \to S$
is finite, where $Z$ is the vanishing locus of $g \in k[X]$.
Moreover $f^{\prime}$ can be written as
$pr_S \circ \Pi^{\prime}= f^{\prime}$,
where
$\Pi^{\prime}: X^{\prime} \to \Aff^1 \times S$
is a finite surjective morphism.
Set
$\tilde X^{\prime}=\pi^{-1}(X^{\prime})$.
Replacing notation we write $X$ for $X^{\prime}$,
$\tilde X$ for $\tilde X^{\prime}$, $Z$ for $Z^{\prime}$,
$f: X \to S$ for
$f^{\prime}: X^{\prime} \to S$,
$\Pi: X \to \Aff^1 \times S$
for
$\Pi^{\prime}: X^{\prime} \to \Aff^1 \times S$.

Let
$\overline {U \times_S X}:= (\tilde U \times_S \tilde X)/\Delta(\Gamma)$.
Clearly,
$U \times_S X = (\tilde U \times_S \tilde X)/(\Gamma \times \Gamma)$.
Let
$$\rho_S: \overline {U \times_S X} \to U \times_S X$$
be the obvious map.

Let
$p_X: U \times_S X \to X$
be projection to $X$ and
$p_U: U \times_S X \to U$
be the projection to $U$.
By Proposition \ref{equating}
the quadratic spaces
$\rho^*_S(p^*_X(q))$
and
$\rho^*_S(p^*_U(q))$
are proportional.

Now the pull-back of $\Pi$ be means of the morphism
$U \hra X \to S$
defines a finite surjective morphism
$\Theta: U \times_S X \to \Aff^1 \times U$.
So,
$\Theta \circ \rho_S: \overline {U \times_S X} \to \Aff^1 \times U$
is a finite surjective morphism of $U$-schemes.
The $U$-scheme
$\overline {U \times_S X}$
is smooth over $U$ since
$U \times_S X$ is smooth over $U$
and
$\rho_S$ is \'{e}tale.
The subscheme
$\Delta(\tilde U)/\Delta(\Gamma) \subset \overline {U \times_S X}$
projects isomorphically onto $U$. So, we are given
with a section $\tilde \Delta$ of the morphism
$$\overline {U \times_S X} \xra{\rho_S} U \times_S X \xra{p_U} U.$$

The recollection from the latter paragraph shows that we are under
the hypotheses of Lemma \ref{OjPan} from Appendix B for the
relative $U$-curve $\mathcal X:= \overline {U \times_S X}$ and its
closed subset $\mathcal Z:= \rho^{-1}_S(U \times_S Z)$. (If to be
more accurate, then one should take the connected component
$\mathcal X^{c}$ of $\mathcal X$ containing $\tilde \Delta(U)$ and
the closed subset $\mathcal Z \cap \mathcal X^{c}$ of $\mathcal
X^{c}$ ).

By Lemma \ref{OjPan}
there exists an open subscheme
$\mathcal X^{0} \hra \mathcal X$
and a finite surjective morphism
$\alpha: \mathcal X^{0} \to \Aff^1 \times U$
such that $\alpha$ is \'{e}tale over
$0 \times U$ and $1 \times U$ and
$\alpha^{-1}(0 \times U) = \tilde \Delta(U)\coprod D_0$.
Moreover if we define $D_1$ as $\alpha^{-1}(1 \times U)$,
then
$D_1 \cap Z = \emptyset$
and
$D_0 \cap Z = \emptyset$.
One has
$[D_1: U]=[D_0:U]+ 1$.
Thus either
$[D_1: U]$
is odd or
$[D_0: U]$
is odd.

Assume $[D_1: U]$ is odd.
Then the morphism
$1 \times U \xleftarrow{\alpha|_{D_1}} D_1$,
the morphism
$D_1 \xra{p_X \circ \rho_S} X-Z$
and the isomorphism
$\bar \Phi:= \Phi|_{D_1}$
satisfy the conclusion of the Proposition \ref{Main}
(here $\Phi$ is from the Proposition \ref{equating}).
The Proposition is proven.

\end{proof}

\section{Appendix A: Equating Lemma}
Let $k$ be a field, $X$ be a $k$-smooth affine scheme, $G$ be a reductive $k$-group,
$\mathcal G/X$ be a principle $G$-bundle over $X$. Let
$\pi: \tilde X \to X$
be a finite \'{e}tale Galois cover of $X$ with a Galois group $\Gamma$ and let
$s: \tilde X \to \mathcal G$
be an $X$-scheme morphism ({\it in other words $\mathcal G$ splits over} $\tilde X$).
Let
$\overline {X \times X}:= (\tilde X \times \tilde X)/\Delta(\Gamma)$.
Clearly,
$X \times X = (\tilde X \times \tilde X)/(\Gamma \times \Gamma)$.
Let
$\pi: \overline {X \times X} \to X \times X$
be the obvious map. Observe that the map $\tilde X \times \tilde X \to \overline {X \times X}$
is an \'{e}tale Galois cover with the Galois group $\Gamma$.

Let $q_i: \tilde X \times \tilde X \to \tilde X$ be projection to the i-th factor
and let
$p_i: X \times X \to X$
be projection to the i-th factor.
The principal $G$ bundles
$\mathcal G_1:= p^*_1(\mathcal G)$
and
$\mathcal G_2:= p^*_2(\mathcal G)$
over $X \times X$ are not isomorphic in general. However the following Proposition holds
\begin{lem}
\label{equating2}
The principal $G$-bundles $\pi^*(\mathcal G_1)$ and $\pi^*(\mathcal G_2)$ are isomorphic
and moreover there is such an isomorphism
$\Phi: \pi^*(\mathcal G_1) \to \pi^*(\mathcal G_2)$
that the restriction of
$\Phi$ to the subscheme
$X= \Delta(\tilde X)/(\Gamma) \subset \overline {X \times X}$
is the identity isomorphism.
\end{lem}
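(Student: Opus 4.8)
The plan is to reduce the statement to a one–cocycle comparison over $\tilde X\times\tilde X$ and then to produce an equating cochain there. Throughout write $\bar p_i:=p_i\circ\pi\colon\overline{X\times X}\to X$, so that $\pi^*\mathcal G_i=\bar p_i^*\mathcal G$.

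\emph{Step 1: the normalisation is automatic.} Both $\bar p_1$ and $\bar p_2$ restrict to $\mathrm{id}_X$ on the copy $X=\Delta(\tilde X)/\Gamma\hookrightarrow\overline{X\times X}$, so both $\pi^*\mathcal G_i$ restrict to $\mathcal G$ there. Hence if $\Phi'\colon\pi^*\mathcal G_1\to\pi^*\mathcal G_2$ is \emph{any} isomorphism, then $\psi:=\Phi'|_X$ is an automorphism of $\mathcal G$, the automorphism $\bar p_1^*\psi$ of $\pi^*\mathcal G_1$ restricts to $\psi$ on $X$, and $\Phi:=\Phi'\circ(\bar p_1^*\psi)^{-1}$ is an isomorphism with $\Phi|_X=\mathrm{id}$. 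So it suffices to prove $\bar p_1^*\mathcal G\cong\bar p_2^*\mathcal G$ over $\overline{X\times X}$.

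\emph{Step 2: cocycle reformulation.} The section $s$ trivialises $\tilde{\mathcal G}:=\pi^*\mathcal G$ over $\tilde X$ and thereby presents $\mathcal G$ by a $1$-cocycle $\theta\in Z^1(\Gamma,G(\tilde X))$, $\gamma\mapsto\theta_\gamma$. The Galois cover $\tilde X\times\tilde X\to\overline{X\times X}$ (Galois group $\Gamma$ acting diagonally) is canonically the pull-back of $\pi\colon\tilde X\to X$ along $\bar p_1$, and also along $\bar p_2$; under the two identifications its structural map to $\tilde X$ becomes $q_1$, respectively $q_2$. Consequently $\bar p_1^*\mathcal G$ and $\bar p_2^*\mathcal G$ are split over $\tilde X\times\tilde X$, for one and the same diagonal $\Gamma$-action, by the cocycles $q_1^*\theta$ and $q_2^*\theta$. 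Thus the Lemma is equivalent to finding $h\in G(\tilde X\times\tilde X)$ with
\[
h(\gamma\tilde x_1,\gamma\tilde x_2)\,\theta_\gamma(\tilde x_1)=\theta_\gamma(\tilde x_2)\,h(\tilde x_1,\tilde x_2)\qquad(\gamma\in\Gamma),
\]
equivalently, with a global section over $\overline{X\times X}$ of the scheme $\underline{\mathrm{Isom}}(\bar p_1^*\mathcal G,\bar p_2^*\mathcal G)$, a torsor under the inner twist $\underline{\mathrm{Aut}}(\bar p_1^*\mathcal G)$.

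\emph{Step 3: construction of $h$ — the main obstacle.} On the closed subscheme $\tilde X\times_X\tilde X\subset\tilde X\times\tilde X$, which is the disjoint union $\coprod_{\gamma\in\Gamma}\mathrm{graph}(\gamma)$ of the graphs of the deck transformations, the Galois structure of $\tilde{\mathcal G}$ supplies a distinguished solution: the rule $h_0(\tilde x,\gamma\tilde x):=\theta_\gamma(\tilde x)$ defines $h_0\colon\tilde X\times_X\tilde X\to G$, and the cocycle identity for $\theta$ shows $h_0$ satisfies the displayed equation on $\tilde X\times_X\tilde X$ and equals $e$ on the diagonal $\Delta(\tilde X)$. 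This is precisely where the choice $\overline{X\times X}=(\tilde X\times\tilde X)/\Delta(\Gamma)$, built from the \emph{diagonal} subgroup, pays off: it forces $\bar p_1$ and $\bar p_2$ to agree on $\tilde X\times_X\tilde X$, so that the Isom-torsor carries there the tautological section. What remains — and I expect this to be the hard part — is to spread $h_0$ out to a solution $h$ on the whole of $\tilde X\times\tilde X$; equivalently, to trivialise $\underline{\mathrm{Isom}}(\bar p_1^*\mathcal G,\bar p_2^*\mathcal G)$ over $\overline{X\times X}$ knowing it is trivial on $\pi^{-1}(\Delta_X)$. Since $X$, hence $\tilde X\times\tilde X$ and $\overline{X\times X}$, is $k$-smooth affine and $\tilde X\times_X\tilde X\hookrightarrow\tilde X\times\tilde X$ is a regular closed immersion of pure codimension $\dim X$, I would carry this out by the geometric techniques already used in the body (\cite{PSV},\cite{PaSV}): present $\tilde X\times\tilde X$, fibrewise over a base of dimension $\dim X-1$, as a relative curve in which $h_0$ is prescribed on a finite \'etale multisection, extend it along the curve, and descend the result. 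Granting $h$, the induced isomorphism of cocycles descends to $\Phi\colon\pi^*\mathcal G_1\to\pi^*\mathcal G_2$, and by Step 1 we normalise it to be the identity on $X$.
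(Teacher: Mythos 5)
Your Steps 1 and 2 are sound and coincide with the paper's framework: the paper likewise reduces the Lemma to comparing the two cocycles $q_1^*(a)$ and $q_2^*(a)$ for the Galois cover $\tilde X\times\tilde X\to\overline{X\times X}$ with group $\Delta(\Gamma)$ (it handles the normalisation on the diagonal by checking directly that the equating element restricts to $1$ there, rather than by post-composing with a pulled-back automorphism as you do, but your reduction is valid).

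The genuine gap is Step 3. You construct the equating element only on the closed subscheme $\tilde X\times_X\tilde X$ and defer its extension to all of $\tilde X\times\tilde X$ to unspecified relative-curve techniques; extending a section of a torsor from a closed subscheme of an affine scheme is in general a problem of Grothendieck--Serre type, so this deferred step is not easier than the Lemma itself, and as written your argument is incomplete. Moreover, the difficulty you anticipate does not arise: because $s$ trivialises $\mathcal G$ over all of $\tilde X$, the composites $s\circ q_1$ and $s\circ q_2$ trivialise the two pulled-back torsors over all of $\tilde X\times\tilde X$, and the comparison element is defined globally in one line by the equality $b\cdot(s\circ q_2)=s\circ q_1$; your $h_0$ is simply the restriction of this $b$ to $\tilde X\times_X\tilde X$ (up to a choice of conventions). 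The paper then verifies the coboundary identity ${}^{\gamma}b\cdot q_2^*(a)(\gamma)\cdot b^{-1}=q_1^*(a)(\gamma)$ by a short formal computation with the defining relation $a_{\gamma}\cdot s=s\circ\gamma$, and obtains $\Delta^*(b)=1$ from $q_1\circ\Delta=q_2\circ\Delta$. No spreading-out, Bertini, or fibration argument enters at this point; that machinery is used elsewhere in the paper (Appendix B), not here. You should replace Step 3 by the explicit global definition of $b$ and the verification of the cocycle relation.
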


\begin{proof}
The morphism $s: \tilde X \to \mathcal G$ gives rise to a 1-cocycle
$a: \Gamma \to G(\tilde X)$ defined as follows:
given $\gamma \in \Gamma$ consider the composition
$s \circ \gamma$ and set
$a_{\gamma} \in G(\tilde X)$
to be a unique element with
$a_{\gamma}\cdot s= s \circ \gamma$
in
$G(\tilde X)$.

It's straight forward to check that the 1-cocycle corresponding
to the principal $G$ bundle
$\pi^*(\mathcal G_1)$
and the morphism
$\tilde X \times \tilde X \xra{q_1} \tilde X \xra{s} \mathcal G$
coincides with the one
$$\Gamma \xra{a} G(\tilde X) \xra{q^*_1} G(\tilde X \times \tilde X).$$
Similarly
the 1-cocycle corresponding
to the principal $G$ bundle
$\pi^*(\mathcal G_2)$
and the morphism
$\tilde X \times \tilde X \xra{q_2} \tilde X \xra{s} \mathcal G$
coincides with the one
$$\Gamma \xra{a} G(\tilde X) \xra{q^*_2} G(\tilde X \times \tilde X).$$
Let
$b \in G(\tilde X \times \tilde X)$
be an element defined by the equality
$b\cdot (s \circ q_2) = s \circ q_1$.
To prove that
the principal $G$ bundles
$\pi^*(\mathcal G_1)$
and
$\pi^*(\mathcal G_2)$
are isomorphic it suffices to check that for every
$\gamma \in \Gamma$
the following relation holds in
$G(\tilde X \times \tilde X)$
\begin{equation}
\label{cob}
^{\gamma}b\cdot q^*_2(a)(\gamma)\cdot b^{-1}= q^*_1(a)(\gamma),
\end{equation}
where
$q^*_i(a)(\gamma):= q^*_i \circ a$
for
$i=1,2$.

To prove the relation (\ref{cob}) it suffices to check the following one in
$\mathcal G(\tilde X \times \tilde X)$
\begin{equation}
\label{cob1}
(^{\gamma}b\cdot q^*_2(a)(\gamma)\cdot b^{-1})\cdot(s \circ q_1)= q^*_1(a)(\gamma)\cdot (s \circ q_1).
\end{equation}
One has the following chain of relations
$$(^{\gamma}b\cdot q^*_2(a)(\gamma)\cdot b^{-1})\cdot (s \circ q_1)=
(\ ^{\gamma}b\cdot q^*_2(a)(\gamma))\cdot (s \circ q_2)=
\ ^{\gamma}b\cdot \ ^\gamma(s \circ q_2)=
$$
$$= \ ^{\gamma}(s \circ q_1)= s \circ q_1 \circ (\gamma \times \gamma)$$
The first one follows from the definition of the element $b$, the second one follows from the commutativity of the diagram
$$
\begin{CD}
G \times \mathcal G                    @>{\nu}>>     \mathcal G      \\
@A{(a(\gamma),s)}AA                                         @AA{s}A    \\
\tilde X                    @>{\gamma}>>            \tilde X \\
@A{q_2}AA                                         @AA{q_2}A \\
\tilde X \times \tilde X  @>{\gamma \times \gamma}>>    \tilde X \times \tilde X,
\end{CD}
$$
the third one follows from the commutativity of the diagram
$$
\begin{CD}
G \times \mathcal G                    @>{\nu}>>     \mathcal G      \\
@A{(b,s \circ q_2)}AA                                         @AA{s}A    \\
\tilde X  \times \tilde X                   @>{q_1}>>            \tilde X \\
@A{\gamma \times \gamma}AA                                          \\ 
\tilde X \times \tilde X. 
\end{CD}
$$
Thus
$(^{\gamma}b\cdot q^*_2(a)(\gamma)\cdot b^{-1})\cdot (s \circ q_1)= s \circ q_1 \circ (\gamma \times \gamma)$.
The right hand side of the relation (\ref{cob1}) is equal to
$s \circ q_1 \circ (\gamma \times \gamma)$ as well, as follows from the commutativity of the diagram
$$
\begin{CD}
G \times \mathcal G                    @>{\nu}>>     \mathcal G      \\
@A{(a(\gamma),s)}AA                                         @AA{s}A    \\
\tilde X                    @>{\gamma}>>            \tilde X \\
@A{q_1}AA                                         @AA{q_1}A \\
\tilde X \times \tilde X  @>{\gamma \times \gamma}>>    \tilde X \times \tilde X.
\end{CD}
$$
So, the relation (\ref{cob1}) holds.
Whence the relation (\ref{cob}) holds.
Whence the principal $G$ bundles
$\pi^*(\mathcal G_1)$
and
$\pi^*(\mathcal G_2)$
are isomorphic.

The composite
$\tilde X \xra{\Delta} \tilde X \times \tilde X \xra{q_2} \tilde X \xra{s} \mathcal G$
equals $s$ and equals the composite
$s \circ q_1 \circ \Delta$.
Whence
$\Delta^*(b)= 1 \in G(\tilde X)$.
This shows that the restriction to $X= \Delta(X)/\Delta(\Gamma)$ of the isomorphism
$\pi^*(\mathcal G_1)$ and $\pi^*(\mathcal G_2)$
corresponding to the element $b$
is the identity isomorphism.
The Lemma is proved.

\end{proof}

\section{Appendix B: a variant of geometric lemma}
Let $k$ be an infinite field, $Y$ be a $k$-smooth algebraic variety, $y \in Y$ be a point,
$\mathcal O= \mathcal O_{Y,y}$ be the local ring, $U=\text{Spec}(\mathcal O)$. Let
$\mathcal X/U$ be a $U$-smooth relative curve with geometrically connected fibres equipped with
a finite surjective morphism
$\pi: \mathcal X \to \Aff^1 \times U$
and equipped with a section
$\Delta: U \to \mathcal X$ of the projection $p: \mathcal X \to U$.
Let
$\mathcal Z \subset \mathcal X$
be a closed subset finite over $U$.
The following Lemma is a variant of Lemma 5.1 from
\cite{OP}.

\begin{lem}
\label{OjPan}
There exists an open subscheme
$\mathcal X^{0} \hra \mathcal X$
and a finite surjective morphism
$\alpha: \mathcal X^{0} \to \Aff^1 \times U$
such that $\alpha$ is \'{e}tale over
$0 \times U$ and $1 \times U$ and
$\alpha^{-1}(0 \times U) = \Delta(U)\coprod D_0$.
Moreover if we define $D_1$ as $\alpha^{-1}(1 \times U)$,
then
$D_1 \cap \mathcal Z = \emptyset$
and
$D_0 \cap \mathcal Z = \emptyset$.
\end{lem}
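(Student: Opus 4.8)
\emph{Proof proposal.} The plan is to produce $\alpha$ in the shape $\alpha=(h,p)$ for a suitable $h\in\mathcal O(\mathcal X)$, with $\mathcal X^{0}=\mathcal X$; the function $h$ will be manufactured out of a function on the closed fibre. Since $\pi=(\pi_{1},p)$ is a $U$-morphism, one first replaces it by $(\pi_{1}-p^{*}\Delta^{*}\pi_{1},\,p)$: this is again finite surjective, and now $\tau:=\pi_{1}$ vanishes on $\Delta(U)$. We may assume $\mathcal X$ connected (pass to the component through $\Delta(U)$, as the Lemma is applied), so $A:=\mathcal O(\mathcal X)$ is a normal domain finite over $\mathcal O[\tau]$, and we may assume $\mathcal O$ is not a field (otherwise $U$ is a point and the Lemma is just its closed-fibre instance). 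Write $\kappa$ for the residue field of $\mathcal O$ — it is infinite since it contains $k$ — let $C:=\mathcal X\times_{U}\Spec\kappa$ be the closed fibre, a smooth geometrically integral affine curve over $\kappa$, let $x_{0}\in C(\kappa)$ be the point cut out by $\Delta$, put $\mathfrak z:=\mathcal Z\cap C$ (a finite set of closed points of $C$), and let $\bar C$ be the smooth projective model of $C$.

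The crux, and the only place where $\kappa$ being infinite is used, is to produce a good function $\bar h_{0}$ on $C$. Choose by Riemann--Roch an effective divisor $E$ on $\bar C$, supported on $\bar C\setminus C$, of degree so large that $|E|$ is very ample and separates tangent vectors; then inside the affine space $\{\bar h\in L(E):\bar h(x_{0})=0\}$ each of the following is open and non-empty: (a) $\bar h$ is non-constant; (b) $\bar h$ has a simple zero at $x_{0}$; (c) $\{\bar h=0\}$ is reduced off $x_{0}$ and its support off $x_{0}$ misses $\mathfrak z$; (d) $\{\bar h=1\}$ is reduced and misses $\mathfrak z$. (For (c), (d) this is the genericity of the corresponding hyperplane sections of $C$ under $|E|$, and since $\kappa$ is infinite enough $\kappa$-rational hyperplanes are available, so the four open sets meet.) Fix such a $\bar h_{0}$ and lift it to $h_{0}\in A$ vanishing on $\Delta(U)$ — possible because the ideal of $\Delta(U)$ in $A$ is $\mathcal O$-flat, hence maps onto the ideal of $x_{0}$ in $\mathcal O(C)$. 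Let $\bar{\mathcal X}$ be the normalisation of $\Pro^{1}\times U$ in the function field of $\mathcal X$; it is normal, proper over $U$, contains $\mathcal X$ as the preimage of $\Aff^{1}\times U$, and its boundary $\partial:=\bar{\mathcal X}\setminus\mathcal X$ is finite over $U$. Pick $0\ne c\in\mathfrak m_{\mathcal O}$ and, for $N$ large, set $h:=h_{0}+c\,\tau^{N}$ and $\alpha:=(h,p)$. Since $\tau$ has a pole along every component of $\partial$, for $N$ large the term $c\,\tau^{N}$ dominates $h_{0}$ there, so the polar locus of $h$ on $\bar{\mathcal X}$ is exactly $\partial$; moreover $h|_{C}=\bar h_{0}$ (as $c\in\mathfrak m_{\mathcal O}$) and $h$ vanishes on $\Delta(U)$.

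Then one verifies the conclusion, the recurring principle being that, $U$ being local, any non-empty closed subscheme of $\mathcal X$ finite over $U$ must meet $C$. For finiteness of $\alpha$: the zero locus of $h$ lies in $\mathcal X$, disjoint from the polar locus $\partial$, so the charts $[h:1]$ and $[1:1/h]$ extend $h$ to a morphism $\bar h:\bar{\mathcal X}\to\Pro^{1}\times U$; this $\bar h$ is proper, and it is quasi-finite because on both the generic and the closed fibre it is non-constant on the unique one-dimensional component (the remaining components of the closed fibre lie in $\partial$ and are finite over $\kappa$); hence $\bar h$ is finite, so $\alpha=\bar h|_{\mathcal X}$ is finite surjective onto $\Aff^{1}\times U$. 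For \'etaleness: the locus $R\subset\mathcal X$ where $dh$ vanishes in $\Omega_{\mathcal X/U}$ is closed, and $R\cap\{h=0\}$, $R\cap\{h=1\}$ are finite over $U$ and empty over $C$ (because $\{\bar h_{0}=0\}$ and $\{\bar h_{0}=1\}$ are reduced), hence empty; so $\alpha$ is \'etale over $0\times U$ and $1\times U$, and in particular $\{h=0\}$ is finite \'etale over $U$. Since $h$ vanishes on $\Delta(U)$, the section $\Delta$ factors through $\{h=0\}$, and as a section of a finite \'etale morphism it is open and closed there; so $\Delta(U)$ is a connected component and one writes $\{h=0\}=\Delta(U)\coprod D_{0}$, $D_{1}:=\{h=1\}$. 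Finally $D_{0}\cap\mathcal Z$ and $D_{1}\cap\mathcal Z$ are finite over $U$ and empty over $C$ by (c) and (d), hence empty.

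The step I expect to be the genuine obstacle is establishing finiteness of $\alpha$: one cannot just restrict a regular function on $\mathcal X$ and hope for finiteness, so one is forced to bring in a compactification and to arrange (this is the role of the $c\,\tau^{N}$-correction) that $h$ has poles along the entire boundary. The normalisation model $\bar{\mathcal X}$ is convenient precisely because it need only be normal and proper over $U$ — not smooth, not flat — for the properness/quasi-finiteness argument to go through. None of this is sensitive to the characteristic; in particular for $\charac=2$ no new phenomenon appears, and the argument is that of \cite[Lemma~5.1]{OP} with the modifications above.
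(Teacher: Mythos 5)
There is a genuine gap at exactly the step you yourself flagged as the obstacle: finiteness of $\alpha=(h,p)$. Arranging that the polar divisor of $h=h_{0}+c\,\tau^{N}$ on $\bar{\mathcal X}$ is supported on all of $\partial$ does \emph{not} make $1/h$ regular along $\partial$, hence does not extend $h$ to a morphism $\bar{\mathcal X}\to\Pro^{1}\times U$: for that you also need the closure in $\bar{\mathcal X}$ of the zero divisor of $h$ to be disjoint from $\partial$, and your construction does not provide this. In fact it fails precisely because you insist $c\in\mathfrak m_{\mathcal O}$ (so as to keep $h|_{C}=\bar h_{0}$): the ``leading coefficient'' of $h$ along the boundary is then a non-unit, so zeros of $h$ on the generic fibre escape to infinity upon specialization. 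Concretely, take $\mathcal X=\Aff^{1}\times U$ with $U=\Spec k[t]_{(t)}$, $\pi$ the identity, $\Delta$ the zero section, $h_{0}=\tau$, $c=t$, $N=2$ (which is ``large'' in your sense: the pole order of $c\tau^{2}$ along $\infty\times U$ exceeds that of $h_{0}$, so the polar locus of $h$ is exactly $\partial$). Then $h=\tau+t\tau^{2}=\tau(1+t\tau)$, and the component $\{1+t\tau=0\}$ of $\{h=0\}$ is non-empty over the generic point of $U$ but empty over the closed point; its closure in $\Pro^{1}\times U$ meets $\infty\times U$. So $\alpha^{-1}(0\times U)$ is quasi-finite but not finite over $U$, $\alpha$ is not finite, and the decomposition $\Delta(U)\coprod D_{0}$ with $D_{0}$ finite \'etale over $U$ fails. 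Since every subsequent step (\'etaleness via ``finite over $U$ and empty on the closed fibre, hence empty'', the splitting off of $\Delta(U)$, the disjointness from $\mathcal Z$) rests on this finiteness, the gap is not local to one sentence.

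The paper closes exactly this hole by a different device: it never tries to realize $\alpha$ by a single regular function corrected at the boundary. It fixes one very ample invertible sheaf $L'\otimes L\otimes L''$ on $\bar{\mathcal X}$ and uses Bertini twice to produce two sections $s'$, $s''$ of it whose divisors $D_{1}'=D'\coprod\Delta(U)$ and $D_{1}''=D''\coprod D_{\infty}$ are disjoint and finite over $U$ \emph{by construction}; then $f=[s':s'']$ is everywhere defined on $\bar{\mathcal X}$, $\bar\alpha=(f,\bar p)$ is proper and quasi-finite hence finite, and $\alpha^{-1}(0\times U)=D_{1}'$, $\bar\alpha^{-1}(\infty\times U)=D_{1}''$ are the prescribed divisors. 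The price is that $\mathcal X^{0}=\bar\alpha^{-1}(\Aff^{1}\times U)$ is a proper open subscheme of $\mathcal X$ (one discards $D''$), which is why the Lemma is stated with an open $\mathcal X^{0}$ rather than with $\mathcal X$ itself --- your additional requirement $\mathcal X^{0}=\mathcal X$ is both unnecessary and an extra burden. If you want to keep a function-theoretic formulation, you must take $c$ a unit (and then redo the genericity analysis on the closed fibre for $\bar h_{0}+\bar c\,\bar\tau^{N}$, since $h|_{C}$ is no longer $\bar h_{0}$), or otherwise force the \emph{zero} divisor of $h$, not merely its polar divisor, to stay away from $\partial$.
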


\begin{proof}
Let $\bar {\mathcal X}$ be the normalization of the scheme $\Pro^1 \times U$ in the function field $k(\mathcal X)$ of $\mathcal X$.
Let $\bar \pi: \bar {\mathcal X} \to \Pro^1 \times U$ be the morphism.
Let $\mathcal X_{\infty}=\pi^{-1}(\infty \times U)$ be the set theoretic preimage of $\infty \times U$.
Let
$\bar p: \bar {\mathcal X} \to U$ be the structure map.
Let
$u \in U$
be the closed point and
$\bar X_u= \bar {\mathcal X} \times_U u$.

Let , $L^{\prime}=\bar \pi^{*}(\mathcal O_{\Pro^1 \times U}(1))$,
$L^{\prime\prime}=\mathcal O_{\bar X}(\Delta(U))$. Let
$D_{\infty}= (\pi^{*})(\infty \times U)$ be the pull-back of the
Cartier divisor $\infty \times U \subset \Pro^1 \times U$. Choose
and fix a closed embedding $i: \bar {\mathcal X} \hra \Pro^{n}
\times U$ of $U$-schemes. Set $L= i^{*}(\mathcal O_{\Pro^n \times
U}(1))$.

The sheaf $L$ is very ample. Thus the sheaf
$L^{\prime\prime} \otimes L$
is very amply as well. So, there exists a closed embedding
$i^{\prime\prime}: \bar {\mathcal X} \hra \Pro^{n^{\prime\prime}} \times U$
of $U$-schemes such that
$L^{\prime\prime} \otimes L=(i^{\prime\prime})^*(\mathcal O_{\Pro^{n^{\prime\prime}} \times U}(1)).$
Using Bertini theorem choose a hyperplane
$H^{\prime\prime} \subset \Pro^{n^{\prime\prime}} \times U$
such that \\
$(a^{\prime\prime})$ $H^{\prime\prime}\cap \Delta(U) =\emptyset$, $H^{\prime\prime}\cap \mathcal Z =\emptyset$,
$H^{\prime\prime}\cap D_{\infty} =\emptyset$.\\
Define a Cartier divisor $D^{\prime\prime}$ on $\bar {\mathcal X}$
as the the closed subscheme
$H^{\prime\prime} \cap {\bar {\mathcal X}}$ of ${\bar {\mathcal X}}$.\\
Regard $D^{\prime\prime}_1:= D^{\prime\prime} \coprod D_{\infty}$
as a Cartier divisor on $\bar {\mathcal X}$. Clearly, one has
$\mathcal O_{\bar {\mathcal X}}(D^{\prime\prime}_1)=
L^{\prime\prime} \otimes L \otimes L^{\prime}$.

The sheaf $L$ is very ample. Thus the sheaf
$L^{\prime} \otimes L$
is very ample as well. So, there exists a closed embedding
$i^{\prime}: \bar {\mathcal X} \hra \Pro^{n^{\prime}} \times U$
of $U$-schemes such that
$L^{\prime} \otimes L=(i^{\prime})^*(\mathcal O_{\Pro^{n^{\prime}} \times U}(1)).$
Using Bertini theorem choose a hyperplane
$H^{\prime} \subset \Pro^{n^{\prime}} \times U$
such that \\
$(a^{\prime})$
$H^{\prime}\cap \Delta(U) =\emptyset$, $H^{\prime}\cap \mathcal Z =\emptyset$,
$H^{\prime}\cap D^{\prime\prime}_1 =\emptyset$;\\
$(b^{\prime})$
the scheme theoretic intersection
$H^{\prime}\cap \bar X_u$ is a $k(u)$-smooth scheme.\\
Define a Cartier divisor $D^{\prime}$ on $\bar {\mathcal X}$ as
the closed subscheme
$D^{\prime}=H^{\prime} \cap \bar {\mathcal X}$ of ${\bar {\mathcal X}}$.\\

Regard $D^{\prime}_1:= D^{\prime} \coprod \Delta(U)$ as a Cartier
divisor on $\bar {\mathcal X}$. Clearly, one has $\mathcal O_{\bar
{\mathcal X}}(D^{\prime}_1)= L^{\prime} \otimes L \otimes
L^{\prime\prime}$.

Observe that $D^{\prime}$ is an essentially $k$-smooth scheme
finite and \'{e}tale over $U$. Let $s^{\prime}$ and
$s^{\prime\prime}$ be global sections of $L^{\prime} \otimes L
\otimes L^{\prime\prime}$ such that the vanishing locus of
$s^{\prime}$ is the Cartier divisor $D^{\prime}_1$ and the
vanishing locus of $s^{\prime\prime}$ is the Cartier divisor
$D^{\prime\prime}_1$. Clearly $D^{\prime}_1 \cap
D^{\prime\prime}_1=\emptyset$. Thus $f=[s^{\prime}:
s^{\prime\prime}]: \bar {\mathcal X} \to \Pro^1$ is a regular
morphism of $U$-schemes. Set
$$\bar \alpha= (f,\bar p): \bar {\mathcal X} \to \Pro^1 \times U.$$

Clearly,
$\bar \alpha$
is a finite surjective morphism. Set
$\mathcal X^0= \bar \alpha^{-1}(\Aff^1 \times U)$
and
$$\alpha=\bar \alpha|_{\mathcal X^0}: \mathcal X^0 \to \Aff^1 \times U.$$
Clearly, $\alpha$ is a finite surjective morphism and $\mathcal X^0$ is an open subscheme of $\mathcal X$.
Since $\alpha$ is a finite surjective morphism and $\mathcal X^0$, $\Aff^1 \times U$ are regular schemes
the morphism $\alpha$ is flat by a theorem of Grothendieck. Since
$D^{\prime}_1$
is finite \'{e}tale over $U$
the morphism $\alpha$ is \'{e}tale over $0 \times U$.
So, we may choose a point $1 \in \Pro^1$ such that
the $\alpha$ is \'{e}tale over $1 \times U$
and
$(\alpha)^{-1}(1 \times U) \cap \mathcal Z=\emptyset$.
If we set
$D_0=D^{\prime}_1$, then
$\alpha^{-1}(0 \times U) = \Delta(U)\coprod D_0$
and
$D_0 \cap \mathcal Z=\emptyset$.
The Lemma is proven.

\end{proof}



\begin{thebibliography}{MMM}

\bibitem[A]{LNM305}
\emph{M. Artin.} Comparaison avec la cohomologie classique: cas
d'un pr\'esch\'ema lisse. Lect.~Notes Math., vol.~305, Exp XI.

\bibitem[OP]{OP}
\emph{M.~Ojanguren, I.~Panin.} A purity theorem for the Witt
group. Ann. Sci. Ecole Norm. Sup. (4) 32 (1999), no. 1, 71--86.


\bibitem[PR]{PR}
\emph{I.Panin, U.Rehmann.} A variant of a Theorem by Springer.
Algebra i Analyz, vol.19 (2007), 117--125.
\url{www.math.uiuc.edu/K-theory/0671/2003}


\bibitem[Pa]{Pa}
\emph{I.~Panin.} Rationally isotropic quadratic spaces are locally
isotropic. Invent. math. 176, 397–-403 (2009).



\bibitem[PaP]{PaPi}
\emph{I.~Panin, K.~Pimenov.} Rationally isotropic quadratic spaces
are locally isotropic:II. Documenta Mathematica, Vol. Extra
Volume: 5. Andrei A. Suslin's Sixtieth Birthday ,  515--523,
(2010).

\bibitem[P]{P}
\emph{D. Popescu.} General N\'{e}ron desingularization and approximation,
Nagoya Math. Journal, 104 (1986), 85--115.


\bibitem[PaSV]{PaSV}
\emph{I.~Panin, A.~Stavrova, N.~Vavilov.} Grothendieck---Serre
conjecture I: Appendix, Preprint October, 2009,
\url{http://arxiv.org/abs/0910.5465}.

\bibitem[PSV]{PSV}
\emph{I.~Panin, A.~Stavrova, N.~Vavilov.} On
Grothendieck---Serre's conjecture concerning principal $G$-bundles
over reductive group schemes I, Preprint (2009),
\url{http://www.math.uiuc.edu/K-theory/0929/}.

\bibitem[Kn]{Kn}
\emph{M.~Knus.} Quadratic and hermitian forms over rings. Springer
Verlag, 1991.

\end{thebibliography}
\end{document}